\documentclass[12pt,a4paper]{article}
\usepackage[utf8]{inputenc}
\usepackage[T1]{fontenc}
\usepackage{hyperref}
\usepackage{amsmath}
\usepackage{amsfonts}
\usepackage{amssymb}
\usepackage{xcolor}
\usepackage{graphicx}
\providecommand{\U}[1]{\protect\rule{.1in}{.1in}}

\newtheorem{theorem}{Theorem}

\newtheorem{conjecture}[theorem]{Conjecture}
\newtheorem{corollary}[theorem]{Corollary}

\newtheorem{lemma}[theorem]{Lemma}

\newtheorem{remark}[theorem]{Remark}

\newenvironment{proof}[1][Proof]{\noindent\textbf{#1.} }{\ \hfill \rule{0.5em}{0.5em}\bigskip}

\graphicspath{{Slike/}}

\begin{document}

\title{Resolutions of two conjectures on the spectral diameter}

\author{Irena Jovanovi\'c\\
\footnotesize{School of Computing, Union University, Serbia}\\
\footnotesize{{\it Email:} irenaire@gmail.com}\vspace{0.5em}\\
Jelena Sedlar \\
\footnotesize{Faculty of civil engineering, architecture and geodesy, University of Split, Croatia}\\
\footnotesize{{\it Email:} jsedlar@gradst.hr}\vspace{0.5em}\\
Riste Škrekovski\\
\footnotesize{Faculty of Mathematics and Physics, University of Ljubljana, Ljubljana, Slovenia;}\\
\footnotesize{Rudolfovo -- Science and Technology Centre Novo Mesto, Slovenia;}\\
\footnotesize{Faculty of Information Studies, University in Novo Mesto, Slovenia}\\
\footnotesize{{\it Email:} skrekovski@gmail.com}}

\date{}
\maketitle

\begin{abstract}
Let $\lambda_1(G)\geq\lambda_2(G)\geq\cdots\geq\lambda_n(G)$ be the adjacency spectrum of a graph $G$ on $n$ vertices. The spectral distance $\sigma(G,H)$ between $n$-vertex graphs $G$ and $H$ is the Manhattan distance between their spectra, i.e. $\sigma(G,H)=\sum\limits_{i=1}^{n} |\lambda_i(G)-\lambda_i(H)|$. Given a set $\mathcal{G}$ of pairwise non-isomorphic graphs of order $n$, the spectral diameter of $\mathcal{G}$ is defined as $\mathrm{sdiam}(\mathcal{G})=\max\{\mathrm{secc}_{\mathcal{G}}(G):G\in\mathcal{G}\}$, where $\mathrm{secc}_{\mathcal{G}}(G)=\max\{\sigma(G,H):H\in\mathcal{G},H\not\cong G\}$ is the spectral eccentricity of $G\in\mathcal{G}$. Among six conjectures on spectral distances posed by Stani\'c in 2012, two conjectures related to the spectral diameter of certain graph classes remained open. One of them concerns the spectral diameter of the set $\mathcal{B}_n$ of all connected bipartite graphs of order $n$, while the other, of the set $\mathcal{T}_n$ of all trees of order $n$. More precisely, Stani\'c conjectured that ${\rm sdiam}(\mathcal{B}_n)={\rm secc}_{\mathcal{B}_n}(K_{\lceil\frac{n}{2}\rceil, \lfloor\frac{n}{2}\rfloor})$ and ${\rm sdiam}(\mathcal{T}_n)=\sigma(P_n, K_{1,n-1})$. In this paper, both of these conjectures are disproved.
\end{abstract}

\textit{Keywords:} adjacency matrix; spectrum; spectral distance; cospectrality; spectral dia\-me\-ter; bipartite graph; tree; chain graph; design.

\textit{AMS Subject Classification numbers:} 05C50

\section{Introduction}

Let $G$ be a finite, simple and undirected graph of order $n$ and size $m$, and let $A=A(G)=[a_{ij}]$, $i,j=1,2,\ldots,n$, be the \emph{adjacency matrix} of $G$. The \emph{characteristic polynomial} $P_G(x)=\det(A-xI_n)$ of $G$, where $I_n$ is the identity matrix, is the characteristic polynomial of its adjacency matrix $A$, while the \emph{(adjacency) eigenvalues} $\lambda_1(G)\geq\lambda_2(G)\geq\cdots\geq\lambda_n(G)$ of $G$ are the eigenvalues of its adjacency matrix $A$. We will always assume that the eigenvalues of graphs considered throughout the paper are ordered monotonically decreasing. The listed eigenvalues form the \emph{spectrum} of $G$. The largest eigenvalue $\lambda_1(G)$ is called the \emph{index} of $G$, while if $\lambda_i(G)$, for some $i$, is the eigenvalue of the multiplicity $k$, we will denote it as $[\lambda_i(G)]^k$. For an integer $r\geq 0$, the \emph{$r$-th spectral moment} $s_r$ of $G$ is defined as $s_r=\sum\limits_{i=1}^{n} \lambda_i(G)^r$. It holds \cite{CvRowSim} that $s_1=0$ and $s_2=2m$.

A graph $G$ is called \emph{bipartite} if its vertex set $V(G)$ can be partitioned into two subsets, say $X$ and $Y$, so that each edge has one vertex in $X$ and one in $Y$; such a partition $(X,Y)$ is called a \emph{bipartition} of the graph. The \emph{complete bipartite graph} $K_{p,q}$ is a bipartite graph with the bipartition $(X, Y)$, where $|X|=p$ and $|Y|=q$, or vice versa, in which each vertex of $X$ is adjacent to each vertex of $Y$. A complete bipartite graph where one of the sets of the bipartition is of the cardinality $1$ is called the \emph{star}. A bipartite graph is said to be \emph{balanced} if $|X|=|Y|$, and \emph{unbalanced} otherwise. It is well known (Theorem 3.2.3 in \cite{CvRowSim}) that a graph $G$ is bipartite if and only if its spectrum is symmetric with respect to the origin, i.e. $\lambda_{n+1-i}(G)=-\lambda_{i}(G)$, for every $1\leq i\leq n$. The adjacency matrix of a bipartite graph $G$ with the bipartition $(X,Y)$ is of the form: $A=A(G)=\left(
                                                                                                                   \begin{array}{cc}
                                                                                                                     0 & B \\
                                                                                                                     B^T & 0 \\
                                                                                                                   \end{array}
                                                                                                                 \right),
$ where $B\in\{0,1\}^{|X|\times |Y|}$ is called the \emph{biadjacency matrix}. Therefore, the eigenvalues of $A$, i.e. $G$, equal $\pm\sigma_1,\ldots,\pm\sigma_l$, together with a suitable number of zeros, where $l=\min\{|X|,|Y|\}$, while $\sigma_1\geq\cdots\geq\sigma_l$ are the singular values of $B$. Here, the singular values of $B$ are the square roots of nonnegative eigenvalues of $B^TB$. For the remaining notation and terminology about graphs and their spectra we refer the reader to \cite{BrouHaem}, \cite{CvDoobSachs} and \cite{CvRowSim}.

Despite numerous applications in various branches of science, among which, perhaps most of all, in Computer Science \cite{CompSci}, the existence of graphs which are not characterized by their spectrum, casts doubt on the efficiency of Spectral Graph Theory. Therefore, it is natural to measure how far a pair of graphs is from being cospectral \footnote{Two graphs are \emph{cospectral} if their (adjacency) spectra coincide.}, that is, how similar in the spectral sense two graphs are, which leads to the notion of the spectral distances of graphs introduced in \cite{aveiro}. Let $G_1$ and $G_2$ be two non-isomorphic graphs on $n$ vertices with the adjacency spectra $\lambda_1(G_i)\geq\lambda_2(G_i)\geq\cdots\geq\lambda_n(G_i)$, $i=1,2$. The \emph{spectral distance} $\sigma(G_1,G_2)$ between $G_1$ and $G_2$ is \cite{aveiro}:
\begin{equation*}
\sigma(G_1,G_2)=\sum\limits_{i=1}^{n} |\lambda_i(G_1)-\lambda_i(G_2)|.
\end{equation*}

The study of such measure, i.e. quantity originates from a group of problems proposed by Richard Brualdi at the Aveiro Workshop on Graph Spectra in 2006. (see \cite{aveiro}), and it has been pursued ever since by several authors \cite{AbdJanObo, AbdObo, AbdZakArX, AbdZak, Jov2022, Jov2015, JovStan2014, JovStan2012, Obo}. Brualdi's problems, which were originally connected with the \emph{cospectrality} of graphs and the \emph{cospectrality measure} of certain sets of non-isomorphic graphs of the same order, were, in certain sense, upgraded in the paper \cite{JovStan2012} by introducing the two additional spectral distance related parameters named spectral eccentricity and spectral diameter. 

Given a set $\mathcal{G}$ of pairwise non-isomorphic graphs of order $n$, the \emph{spectral eccentricity} of $G\in\mathcal{G}$ and the \emph{spectral diameter} of
$\mathcal{G}$ are defined as follows:
\begin{equation*}
\mathrm{secc}_{\mathcal{G}}(G)=\max\{\sigma(G,H):H\in\mathcal{G},H\not\cong G\}\quad\text{and}\quad
\mathrm{sdiam}(\mathcal{G})=\max\{\mathrm{secc}_{\mathcal{G}}(G):G\in\mathcal{G}\},
\end{equation*}
respectively. In \cite{JovStan2012}, Stani\'c posed six conjectures regarding spectral distances of graphs of which four relate to the spectral diameter of some well known and widely studied graph classes. Two of these four conjectures, which concern the diameter of the set of all graphs of order $n$ and the diameter of the set of all connected regular graphs of order $n$, were disproved in \cite{Jov2022, Jov2015}. Here, we consider the remaining two:

\begin{conjecture}\label{con_bip}
Let $\mathcal{B}_n$ be the set of all connected bipartite graphs of order $n$, and let $B_1, B_2\in \mathcal{B}_n$ be the graphs having the maximal spectral distance on the given set. Then one of them is the complete bipartite graph $K_{\lceil\frac{n}{2}\rceil, \lfloor\frac{n}{2}\rfloor}$, i.e.
$${\rm sdiam}(\mathcal{B}_n)={\rm secc}_{\mathcal{B}_n}(K_{\lceil\frac{n}{2}\rceil, \lfloor\frac{n}{2}\rfloor}).$$
\end{conjecture}

\begin{conjecture}\label{con_trees}
Let $\mathcal{T}_n$ be the set of all trees of order $n$. The spectral distance between any two trees from the set $\mathcal{T}_n$ does not exceed the spectral distance $\sigma(P_n, K_{1,n-1})$ between the path and the star graph of order $n$, i.e.
$${\rm sdiam}(\mathcal{T}_n)=\sigma(P_n, K_{1,n-1}).$$
\end{conjecture}

Recall that the \emph{energy} $\mathcal{E}(G)$ of a graph $G$ is the sum of the absolute values of its eigenvalues, i.e. $\mathcal{E}(G)=\sum\limits_{i=1}^{n} |\lambda_i(G)|$. This widely popular graph invariant was introduced by Ivan Gutman in 1978. in his paper \cite{Gutman}, and for the last two decades it has been intensively studied, especially in the field of mathematics and chemistry. For more details about graph energy, the reader is referred to the monographs \cite{GutLi} and \cite{Li}, and the review papers \cite{GutFurtula} and \cite{GutFurtula2}.

It can be easily verified that $\sigma(G,H)\leq\mathcal{E}(G)+\mathcal{E}(H)$ holds for every pair of graphs $G$ and $H$ of the same order. Therefore, it seems graphs of extremal energy can be considered as candidates for pairs of graphs attaining the value of the spectral diameter on the sets of graphs under study, and this is what the conjectures of Stani\'{c} on the spectral diameter of some well known graph classes can be understood. However, Conjecture \ref{con_bip} and Conjecture \ref{con_trees} both fail. In Section 2, we disprove Conjecture \ref{con_trees} by calculating the spectral distance between the path graph and a tree which is the coalescence of two stars, while in Section 3, we disprove Conjecture \ref{con_bip} by considering the spectral distance between the incidence graph of a Menon symmetric desing and a bipartite chain graph.

\section{Conjecture regarding the spectral diameter of the set of all trees}

In the literature related to some spectral characterizations of graphs, the graph operation named coalescence is frequently used. The \emph{coalescence} $G\cdot H$ of two arbitrary graphs $G$ and $H$ is a graph obtained from their (disjoint) union $G\cup H$ by identifying a vertex $u$ of $G$ with a vertex $v$ of $H$. The proof of the following statement can be found in \cite{CvRowSim}:

\begin{theorem}(Theorem 2.2.3 in \cite{CvRowSim})\label{coalescence}
Let $G\cdot H$ be the coalescence in which the vertex $u$ of $G$ is identified with the vertex $v$ of $H$. Then:
\begin{equation*}
P_{G\cdot H}(x)=P_G(x)\,P_{H-v}(x)+P_{G-u}(x)\,P_H(x)-x\,P_{G-u}(x)\,P_{H-v}(x),
\end{equation*}
where $G-u$ denotes the graph obtained by removing the vertex $u$ from the graph $G$ (and similarly for $H$ and its vertex $v$).
\end{theorem}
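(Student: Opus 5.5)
We prove the coalescence formula for $P_{G\cdot H}(x)$ by expanding the determinant $\det(A(G\cdot H)-xI)$. The expansion is organized around the identified vertex $w$ (the common image of $u$ and $v$), using the Sachs-type (permutation) expansion of the determinant.

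Order the vertices as $V(G)\setminus\{u\}$, then $w$, then $V(H)\setminus\{v\}$. The matrix $M=A(G\cdot H)-xI$ is then block tridiagonal: it has blocks $A(G-u)-xI$ and $A(H-v)-xI$, joined only through the row and column of $w$. No entry directly links a vertex of $G-u$ to a vertex of $H-v$.

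Write $\det M=\sum_{\pi}\mathrm{sgn}(\pi)\prod_i M_{i,\pi(i)}$ and classify the permutations $\pi$ with nonzero product by the cycle $C$ of $\pi$ containing $w$.
\begin{itemize}
\item If $C$ is the fixed point $w$, the term contributes $(-x)\,P_{G-u}(x)P_{H-v}(x)$ in total.
\item Otherwise, $C$ has length at least two and passes through $w$. Since $G-u$ and $H-v$ are not adjacent, a cycle (or a transposition $w\leftrightarrow y$) through $w$ must stay entirely within $G$ or entirely within $H$ apart from $w$. The remaining cycles of $\pi$ then split into a permutation of the leftover vertices of $G$ and a permutation of all of $H-v$ (or vice versa).
\end{itemize}

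Now compare with $P_G(x)\,P_{H-v}(x)$. Expanding $P_G(x)$ in the same way splits its terms into those with $u$ fixed, which sum to $-x\,P_{G-u}(x)$, and those where $u$ lies in a nontrivial cycle. Multiplied by $P_{H-v}(x)$, the latter give exactly the contributions to $\det M$ in which $w$'s cycle lies in $G$. Hence
\[
\sum_{\text{$w$-cycle in $G$}} = P_G(x)P_{H-v}(x) + x\,P_{G-u}(x)P_{H-v}(x).
\]
Symmetrically,
\[
\sum_{\text{$w$-cycle in $H$}} = P_{G-u}(x)P_H(x) + x\,P_{G-u}(x)P_{H-v}(x).
\]
Adding the fixed-point contribution $-x\,P_{G-u}P_{H-v}$ yields
\[
P_{G\cdot H}=P_GP_{H-v}+P_{G-u}P_H-xP_{G-u}P_{H-v}.
\]

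The only delicate point is the bookkeeping in the second step. We must check that signs factor correctly: the sign of a permutation is the product of the signs of its restrictions to invariant sets. We must also check that no permutation is counted twice or omitted, which holds because the support of $w$'s cycle is uniquely in $G$, uniquely in $H$, or trivial. An alternative route is linearity of the determinant in the row of $w$: write that row as the sum of its $G$-part and its $H$-part, expand, and correct for the diagonal entry $-x$ being counted in both. This yields the same identity and can serve as a check.
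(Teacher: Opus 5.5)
The paper gives no proof of this statement; it is quoted from \cite{CvRowSim}. Your permutation expansion, organized by the cycle through $w$, is a sound way to prove it, and the structural claims are correct. A nontrivial cycle through $w$ cannot pass between $G-u$ and $H-v$, signs factor over invariant sets, and the three classes exhaust the permutations with nonzero product.

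The error is in the final addition. With $M=A(G\cdot H)-xI$, you derived three contributions: $P_GP_{H-v}+xP_{G-u}P_{H-v}$, then $P_{G-u}P_H+xP_{G-u}P_{H-v}$, then $-xP_{G-u}P_{H-v}$. Their sum is
\[
P_GP_{H-v}+P_{G-u}P_H+x\,P_{G-u}P_{H-v},
\]
not the claimed expression with $-x$.

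Recounting will not repair this. For $P_G(x)=\det(A-xI)$, which is the convention you adopted and the one stated in the paper's introduction, the $+x$ identity is the true one and the displayed formula is false. Take $G=H=K_2$, so $G\cdot H=P_3$. Then $\det(A(P_3)-xI)=-x^3+2x$. On the other hand, $P_{K_2}=x^2-1$ and $P_{K_1}=-x$ make the right-hand side with $-x$ equal to $-3x^3+2x$.

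The formula with $-x$ holds for $P_G(x)=\det(xI-A)$. This is the convention of \cite{CvRowSim}, and the one the paper actually uses when it writes $P_{K_{1,\alpha}}(x)=(x^2-\alpha)x^{\alpha-1}$. The two conventions differ by the factor $(-1)^{|V|}$. Since $P_{G-u}P_{H-v}$ involves one vertex fewer than the other three products, exactly the last term changes sign.

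So fix the convention explicitly and run your argument with $M=xI-A(G\cdot H)$. The fixed-point term becomes $+x\,P_{G-u}P_{H-v}$, and the $u$-fixed part of $P_G$ is $+x\,P_{G-u}$. The $G$-side and $H$-side sums become $P_GP_{H-v}-xP_{G-u}P_{H-v}$ and $P_{G-u}P_H-xP_{G-u}P_{H-v}$. The total is then $P_GP_{H-v}+P_{G-u}P_H-xP_{G-u}P_{H-v}$, as stated.
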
 

Let $T_{a,b}$ be the coalescence in which a pendant vertex (i.e. vertex whose degree is equal $1$) of the star $K_{1,a+1}$ is identified with a pendant vertex of another star $K_{1,b+1}$, where $a,b\geq 1$. Obviously, $T_{a,b}$ is a tree of order $a+b+3$. The tree $T_{a,b}$ for $a=4$ and $b=5$ is depicted in Figure \ref{treeTabFig}. We say that $T_{a,b}$ is \emph{balanced} if $a=\left\lfloor\frac{a+b}{2}\right\rfloor$ and $b=\left\lceil\frac{a+b}{2}\right\rceil$, i.e. $a=b$, in which case the order of $T_{a,b}$ is an odd number, or $b=a+1$, in which case the order of $T_{a,b}$ is an even number. The spectrum of $T_{a,b}$ is given by the following statement:

\begin{figure}[h]
%\vspace{-10pt}
\centering
\includegraphics[scale=1]{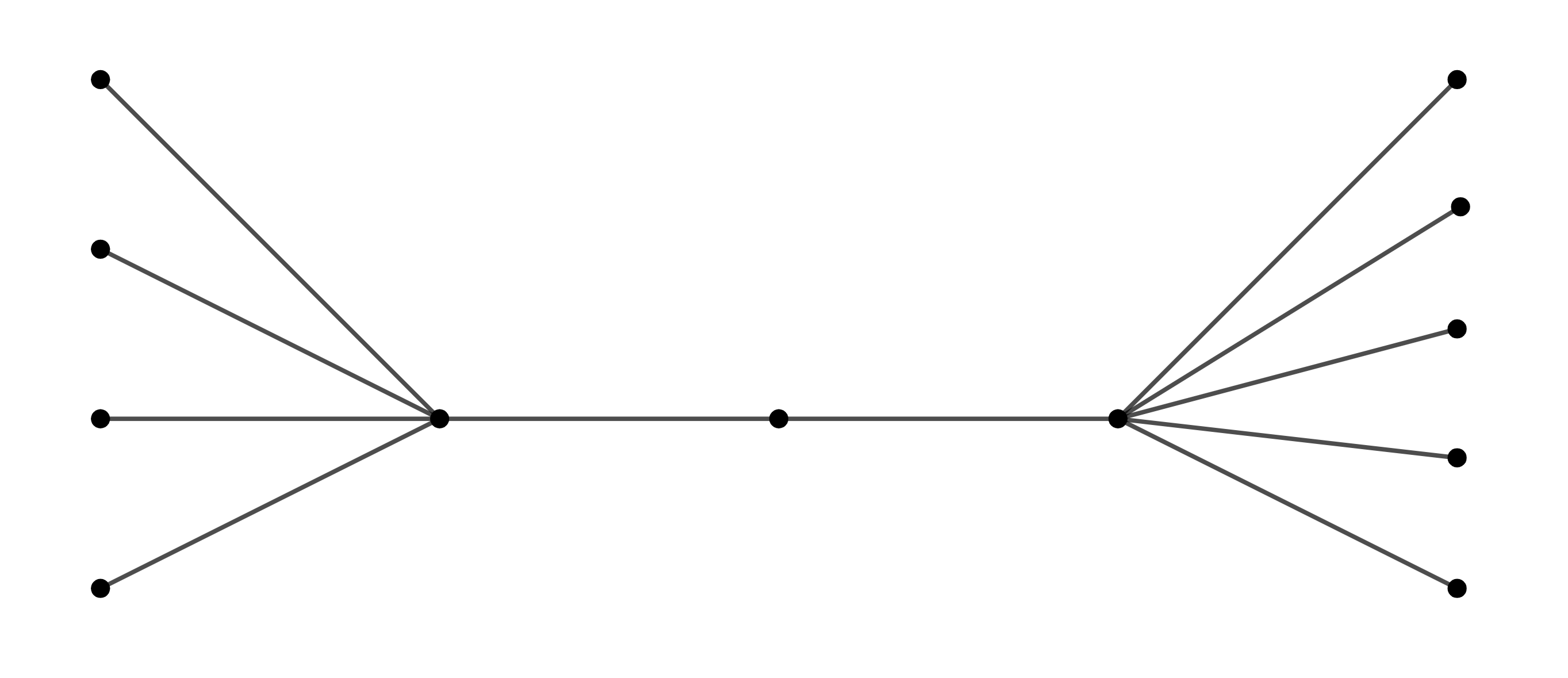}
\vspace{10pt}
\caption{Tree $T_{4,5}$}
\label{treeTabFig}
\end{figure}
%\vspace{15pt}

\begin{lemma}\label{spectrum}
Let $a,b\geq 1$. The spectrum of the tree $T_{a,b}$ has exactly four non-zero eigenvalues: $\lambda_1(T_{a,b})=\sqrt{\frac{a+b+2+\sqrt{(a-b)^2+4}}{2}}$, $\lambda_2(T_{a,b})=\sqrt{\frac{a+b+2-\sqrt{(a-b)^2+4}}{2}}$, $\lambda_{n-1}(T_{a,b})=-\lambda_2(T_{a,b})$ and $\lambda_{n}(T_{a,b})=-\lambda_1(T_{a,b})$. Furthermore, the two largest eigenvalues satisfy the following identity: 
\begin{equation}\label{eigen_sum}
\lambda_{1}(T_{a,b})+\lambda_{2}(T_{a,b})=\sqrt{a+b+2+2\sqrt{ab+a+b}}.
\end{equation} 
\end{lemma}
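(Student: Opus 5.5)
I will compute the characteristic polynomial of $T_{a,b}$ using the coalescence formula (Theorem~\ref{coalescence}). Take $G=K_{1,a+1}$ with $u$ a pendant vertex, and $H=K_{1,b+1}$ with $v$ a pendant vertex. Then $G-u=K_{1,a}$ and $H-v=K_{1,b}$. For a star we have the standard formula $P_{K_{1,k}}(x)=(-1)^{k+1}x^{k-1}(x^2-k)$, since the star has eigenvalues $\pm\sqrt{k}$ and $0$ with multiplicity $k-1$. Substituting these into the coalescence formula and factoring out the common power of $x$ gives
\[
P_{T_{a,b}}(x)=\pm x^{a+b-1}\Bigl[(x^2-a-1)(x^2-b)+(x^2-a)(x^2-b-1)-(x^2-a)(x^2-b)\Bigr].
\]
The bracket simplifies to the quartic $x^4-(a+b+2)x^2+(ab+a+b)$. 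I will check the signs carefully: each term carries the factor $(-1)^{a+b+1}$ up to the same parity, and the extra factor $x$ in the last term is absorbed when pulling out $x^{a+b-1}$. This is the only step needing real care, since a sign slip would corrupt the quartic.

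With the polynomial in hand, the spectrum follows directly. There are $a+b-1$ zero eigenvalues. The remaining four eigenvalues satisfy $y^2-(a+b+2)y+(ab+a+b)=0$ with $y=x^2$. The discriminant is
\[
(a+b+2)^2-4(ab+a+b)=(a-b)^2+4>0,
\]
and the product of the roots $ab+a+b$ is positive, so both roots in $y$ are positive and distinct. Their square roots give the stated $\lambda_1$ and $\lambda_2$, together with their negatives. The count is consistent: $a+b-1+4=a+b+3=n$. The ordering $\lambda_1>\lambda_2>0>-\lambda_2>-\lambda_1$ is immediate, which also matches the bipartite symmetry of the spectrum.

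For identity \eqref{eigen_sum}, I square the sum:
\[
(\lambda_1+\lambda_2)^2=\lambda_1^2+\lambda_2^2+2\lambda_1\lambda_2=(a+b+2)+2\sqrt{ab+a+b}.
\]
Here I use Vieta's formulas for $y_1+y_2$ and $y_1y_2$, and the fact that $\lambda_1\lambda_2=\sqrt{y_1y_2}$ since both are positive. Taking the positive square root yields \eqref{eigen_sum}.
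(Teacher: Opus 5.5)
Your route is the same as the paper's: coalesce $K_{1,a+1}$ and $K_{1,b+1}$ at pendant vertices, obtain the quartic $x^4-(a+b+2)x^2+ab+a+b$, then use $\lambda_1^2+\lambda_2^2=a+b+2$ and $\lambda_1\lambda_2=\sqrt{ab+a+b}$ from Vieta. The one step that fails as written is the sign check you singled out.

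Your star formula $P_{K_{1,k}}(x)=(-1)^{k+1}x^{k-1}(x^2-k)$ is correct for $P_G(x)=\det(A-xI)$. With it, the products $P_{K_{1,a+1}}P_{K_{1,b}}$ and $P_{K_{1,a}}P_{K_{1,b+1}}$ carry $(-1)^{a+b+1}$. However, $P_{K_{1,a}}P_{K_{1,b}}$ carries $(-1)^{(a+1)+(b+1)}=(-1)^{a+b}$, so the parities are not all the same. Substituting into Theorem~\ref{coalescence} with its $-x$ term therefore gives $(-1)^{a+b+1}x^{a+b-1}$ times a bracket ending in $+(x^2-a)(x^2-b)$. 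Its leading term is $3x^4$, which is exactly the corrupted quartic you were worried about.

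The cause is a convention mismatch. The coalescence formula with $-x\,P_{G-u}P_{H-v}$ is the one valid for $P_G(x)=\det(xI-A)$, the convention of Cvetkovi\'c, Rowlinson and Simi\'c. That is the convention the paper's computation tacitly uses: it takes $P_{K_{1,\alpha}}(x)=(x^2-\alpha)x^{\alpha-1}$ with no sign, despite defining $P_G$ as $\det(A-xI)$ in the introduction. For $\det(A-xI)$ the last term must instead be $+x\,P_{G-u}P_{H-v}$; you can check this on $K_2\cdot K_2=P_3$. Work in one consistent convention, most simply $\det(xI-A)$ where all sign factors disappear. Then your bracket and everything after it are correct.

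A minor point: to conclude that both roots $y$ are positive, you need the sum $a+b+2>0$ as well as the positive product.
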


\begin{proof}
Since the characteristic polynomial of the star $K_{1,\alpha}$ is (see Section 2.6 in \cite{CvRowSim}): $P_{K_{1,\alpha}}(x)=(x^2-\alpha)\,x^{\alpha-1}$, and since by removing a pendant vertex of a star whose order is $\alpha+1$, we obtain the star of order $\alpha$, by use of Theorem \ref{coalescence}, we easily find the characteristic polynomial of the tree $T_{a,b}$:
\begin{align*}
P_{T_{a,b}}(x) = &\, P_{K_{1,a+1}}(x)\cdot P_{K_{1,b}}(x)+P_{K_{1,a}}(x)\cdot P_{K_{1,b+1}}(x)-x\cdot P_{K_{1,a}}(x)\cdot P_{K_{1,b}}(x)\\
               = &\, (x^2-a-1)(x^2-b)\,x^{a+b-1}+(x^2-a)(x^2-b-1)\,x^{a+b-1}\\
                 &\, - (x^2-a)(x^2-b)\,x^{a+b-1},
\end{align*}
that is
\begin{equation*}\label{t_ab_char}
P_{T_{a,b}}(x)=x^{a+b-1}\,(x^4-(a+b+2)x^2+ab+b+a).
\end{equation*}
The spectrum of $T_{a,b}$ consists of the following eigenvalues: $[0]^{a+b-1}$, $\pm \sqrt{\frac{a+b+2+\sqrt{(a-b)^2+4}}{2}}$ and $\pm\sqrt{\frac{a+b+2-\sqrt{(a-b)^2+4}}{2}}$. It is obvious that $\lambda_1(T_{a,b})=\sqrt{\frac{a+b+2+\sqrt{(a-b)^2+4}}{2}}$ and $\lambda_2(T_{a,b})=\sqrt{\frac{a+b+2-\sqrt{(a-b)^2+4}}{2}}$, as well as $\lambda_{n-1}(T_{a,b})=-\lambda_2(T_{a,b})$ and $\lambda_{n}(T_{a,b})=-\lambda_1(T_{a,b})$, since $T_{a,b}$ is bipartite. This proves the first part of the statement.

For the second part, let us notice that by direct computation one finds
$$\lambda_{1}(T_{a,b})^{2}+\lambda_{2}(T_{a,b})^{2}=a+b+2,$$ 
while from Vieta's formulas it follows 
$$\lambda_{1}(T_{a,b})\lambda_{2}(T_{a,b})=\sqrt{ab+a+b}.$$ 
Therefore, using the previous two equalities in the formula for the square of the binomial consisting of $\lambda_1(T_{a,b})$ and $\lambda_2(T_{a,b})$, we obtain (\ref{eigen_sum}).
\end{proof}

\begin{lemma}\label{balanced_tree}
Let $T_{a,b}$, $a,b\geq 1$, be balanced. Then $\lambda_2(T_{a,b})>2$, for every integer $a>4$. 
\end{lemma}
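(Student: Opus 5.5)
We use the explicit formula from Lemma \ref{spectrum} and split into the two balanced cases, $b=a$ and $b=a+1$. Since $\lambda_2(T_{a,b})>0$, the inequality $\lambda_2(T_{a,b})>2$ is equivalent to $\lambda_2(T_{a,b})^2>4$, that is,
$$\frac{a+b+2-\sqrt{(a-b)^2+4}}{2}>4, \quad\text{i.e.}\quad a+b-6>\sqrt{(a-b)^2+4}.$$
In both balanced cases $|a-b|\le 1$, so the right-hand side is a small constant. This reduces everything to a linear inequality in $a$.

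\emph{Case $b=a$.} Here $(a-b)^2+4=4$, so the condition reads $2a-6>2$, i.e. $a>4$. This is exactly the hypothesis.

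\emph{Case $b=a+1$.} Here $(a-b)^2+4=5$, so the condition reads $2a-5>\sqrt5$. Since $a\ge 5$, the left-hand side is positive, and squaring is legitimate. The condition becomes $a>\frac{5+\sqrt5}{2}\approx 3.62$, which holds for every integer $a>4$ (indeed already for $a\ge 4$).

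The only point needing care is justifying the squaring, i.e. that $a+b-6>0$. This holds since $a+b\ge 2a\ge 10$. There is no real obstacle. The threshold $a>4$ is sharp only in the case $a=b$: for $a=b=4$ one gets exactly $\lambda_2(T_{4,4})=2$. I would remark on this to explain the hypothesis.
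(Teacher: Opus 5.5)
Your proof is correct and follows the paper's argument: you reduce $\lambda_2(T_{a,b})>2$ to $a+b-6>\sqrt{(a-b)^2+4}$ and split into the cases $b=a$ and $b=a+1$. The only difference is that the paper squares to get $8+ab-3a-3b>0$ before substituting, whereas you evaluate the radical directly as $2$ or $\sqrt5$; this also makes your remarks about justifying a squaring step unnecessary.
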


\begin{proof}
We should prove that $\lambda_2(T_{a,b})=\sqrt{\frac{a+b+2-\sqrt{(a-b)^2+4}}{2}}>2$, which is equivalent to $a+b-6>\sqrt{(a-b)^2+4}$. If $a+b>6$, the last inequality becomes: 
\begin{equation}\label{inequality}
8+ab-3a-3b>0
\end{equation}
Since $T_{a,b}$ is balanced, $b=a$ or $b=a+1$. If $b=a$, the inequality (\ref{inequality}) reduces to $a^2-6a+8>0$, which holds for $a>4$. If $b=a+1$, the inequality (\ref{inequality}) is of the form $a^2-5a+5>0$, which is true for $a=1$ and $a\geq4$. This completes the proof. 
\end{proof}

In order to prove the main statement of the section, we need the following lemma.

\begin{lemma}\label{function}
Let $F(x)=\sqrt{x+\sqrt{x^2-5}}-\sqrt{x}$, where $x\geq 3$. Then, $F(x)>4$, for every $x\geq 94$.
\end{lemma}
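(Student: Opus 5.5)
Since $\sqrt{x+\sqrt{x^2-5}}>\sqrt{x}$, we have $F(x)>4$ if and only if $\sqrt{x+\sqrt{x^2-5}}>\sqrt{x}+4$. Both sides are positive, so we may square. This gives the equivalent condition
\[
\sqrt{x^2-5}>8\sqrt{x}+16 .
\]
Squaring once more reduces the claim to the polynomial-type inequality
\[
g(x):=x^2-21-16x-256\sqrt{x}>0 .
\]
So the plan is to prove $g(x)>0$ for all $x\ge 94$, using only squaring steps that are reversible for positive quantities.

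To handle $g$, I would substitute $t=\sqrt{x}$, so that $t\ge\sqrt{94}>9.69$. The inequality becomes
\[
h(t)=t^4-16t^2-256t-21>0 .
\]
Its derivative is $h'(t)=4t^3-32t-256$, which is increasing for $t\ge 2$. Moreover $h'(9)=2916-288-256>0$, so $h$ is increasing on $[9,\infty)$. It therefore suffices to check that $h(\sqrt{94})>0$. In the variable $x$ this reads $94^2-16\cdot 94-21-256\sqrt{94}=7311-256\sqrt{94}$. Since $\sqrt{94}<9.7$, we have $256\sqrt{94}<2483.2$, and the value is clearly positive.

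The threshold $94$ is certainly not sharp: a rough estimate shows $h(t)>0$ already holds for $t$ around $8$. Since the authors only need the claim for $x\ge 94$, this slack lets the proof go through with crude numerical estimates. The main care point is the chain of equivalences: each squaring must be applied only when both sides are nonnegative. This holds here because $x\ge 3$ makes $\sqrt{x^2-5}$ real, and every side involved is positive. No real obstacle is expected.
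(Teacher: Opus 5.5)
\textbf{There is a genuine gap: the second squaring is expanded incorrectly.} Since $(8\sqrt{x}+16)^2=64x+256\sqrt{x}+256$, the inequality $\sqrt{x^2-5}>8\sqrt{x}+16$ is equivalent to $x^2-64x-256\sqrt{x}-261>0$. This is exactly the paper's $G(x)$. It is not equivalent to your $g(x)=x^2-16x-256\sqrt{x}-21>0$. Because $g(x)-G(x)=48x+240>0$, proving $g>0$ does not give $G>0$, so your argument does not establish $F(x)>4$.

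Your own remark exposes the error. Your $h$ is positive at $t=8$, yet $F(64)=\sqrt{64+\sqrt{4091}}-8\approx 3.31<4$. The claim that the threshold $94$ is far from sharp is also false. In fact $F(93)\approx 3.994<4$, and the paper notes $G(93)<0<G(94)$. So the root lies in $(93,94)$, and there is very little slack.

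\textbf{How to fix it.} Your strategy is otherwise the paper's: two reversible squarings, monotonicity of the resulting function, and evaluation at the endpoint. The substitution $t=\sqrt{x}$ is only cosmetic. The argument goes through once the algebra is corrected.
\begin{itemize}
\item Take $h(t)=t^4-64t^2-256t-261$.
\item Then $h'(t)=4t^3-128t-256$. This derivative is increasing for $t\geq 4$, since $h''(t)=12t^2-128>0$ there.
\item Also $h'(9)=1508>0$, so $h$ is increasing on $[9,\infty)$.
\item At the endpoint, $h(\sqrt{94})=2559-256\sqrt{94}>2559-256\cdot 9.7=75.8>0$, using $9.7^2=94.09>94$.
\end{itemize}
The endpoint estimate must be done with some care. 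The crude bound $\sqrt{94}<10$ would give only $2559-2560=-1$, which proves nothing. The true margin is about $77$.
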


\begin{proof}
Two successive squarings show that the inequality $F(x)>4$ is equivalent first to $\sqrt{x^2-5}>16+8\,\sqrt{x}$, and then to $x^2-64x-256\sqrt{x}-261>0$. Let us denote by $G(x)=x^2-64x-256\sqrt{x}-261$. Since $G(93)\approx-32.77<0$ and $G(94)\approx 76.99>0$, the function $G(x)$ has a zero in the interval $(93,94)$. Now, we have: $G^{\prime}(x)=2x-64-\frac{128}{\sqrt{x}}$, wherefrom we find that for $x\geq 50$, $G^{\prime}(x)\geq 2\cdot 50-64-\frac{128}{\sqrt{50}}\approx 17.90>0$. This means that $G(x)$ is monotonically increasing on the interval $[50,+\infty)$, and having in mind that $G(94)>0$, we conclude that $G(x)>0$ in the interval $[94,+\infty)$. The proof is completed.
\end{proof}

\begin{theorem}\label{main1}
For every $n\geq 95$, it holds 
$$\sigma(P_n, T_{a,b})>\sigma(P_n,K_{1,n-1}),$$
where $T_{a,b}$ is balanced, and such that $a+b=n-3$.
\end{theorem}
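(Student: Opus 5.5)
The plan is to compute both spectral distances explicitly in terms of the eigenvalues of $P_n$, and then reduce the required inequality to Lemma \ref{function}. Recall that the spectrum of $P_n$ is $2\cos\frac{\pi i}{n+1}$, $i=1,\ldots,n$. All of these eigenvalues lie in $(-2,2)$ and are symmetric about $0$. The spectrum of $K_{1,n-1}$ is $\pm\sqrt{n-1}$ together with $[0]^{n-2}$. By Lemma \ref{spectrum}, the spectrum of $T_{a,b}$ is $\pm\lambda_1(T_{a,b})$, $\pm\lambda_2(T_{a,b})$ together with zeros. Throughout I abbreviate $\mu_i=\lambda_i(P_n)$.

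\emph{Step 1: the path versus the star.} Since $\sqrt{n-1}>2>\mu_1$, the extreme terms contribute $2(\sqrt{n-1}-\mu_1)$. The middle terms contribute $\sum_{i=2}^{n-1}|\mu_i|=\mathcal{E}(P_n)-2\mu_1$. Hence
$$\sigma(P_n,K_{1,n-1})=\mathcal{E}(P_n)+2\sqrt{n-1}-4\mu_1 .$$

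\emph{Step 2: the path versus $T_{a,b}$.} Since $a+b=n-3\geq 92$ and $T_{a,b}$ is balanced, we have $a\geq 46>4$. So Lemma \ref{balanced_tree} gives $\lambda_2(T_{a,b})>2>\mu_1\geq\mu_2$, and consequently every absolute value can be resolved with a fixed sign. The four nonzero eigenvalue pairs contribute $2(\lambda_1(T_{a,b})-\mu_1)+2(\lambda_2(T_{a,b})-\mu_2)$. The remaining positions contribute $\mathcal{E}(P_n)-2\mu_1-2\mu_2$. This yields
$$\sigma(P_n,T_{a,b})=\mathcal{E}(P_n)+2\bigl(\lambda_1(T_{a,b})+\lambda_2(T_{a,b})\bigr)-4\mu_1-4\mu_2 .$$

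\emph{Step 3: reduction.} Subtracting the two formulas, the claim becomes
$$\lambda_1(T_{a,b})+\lambda_2(T_{a,b})-\sqrt{n-1}>2\mu_2 .$$
Since $\mu_2<2$, it suffices to show that the left-hand side exceeds $4$. Put $x=n-1$. By (\ref{eigen_sum}), the left-hand side equals $\sqrt{x+2\sqrt{ab+a+b}}-\sqrt{x}$. I would now split according to the two balanced cases.
\begin{itemize}
\item If $a=b$, then $x=2a+2$ and $4(ab+a+b)=x^2-4$.
\item If $b=a+1$, then $x=2a+3$ and $4(ab+a+b)=x^2-5$.
\end{itemize}
In both cases $2\sqrt{ab+a+b}\geq\sqrt{x^2-5}$. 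Therefore the left-hand side is at least $F(x)$, and Lemma \ref{function} gives $F(x)>4$ for $x=n-1\geq 94$, which is exactly $n\geq 95$.

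The only delicate point is Step 2. There one must make sure that the ordering $\lambda_2(T_{a,b})>\mu_1$ holds, so that the absolute values can be resolved as stated; this is precisely what Lemma \ref{balanced_tree} supplies. After that, the algebraic identification of $ab+a+b$ in the two balanced cases is routine.
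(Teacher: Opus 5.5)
Your proposal is correct and follows essentially the same route as the paper. You resolve the absolute values via Lemma~\ref{balanced_tree}, subtract the two distance formulas, and bound $\sqrt{x+2\sqrt{ab+a+b}}-\sqrt{x}$ from below by $F(x)$ with $x=n-1=a+b+2$, using the two balanced cases and Lemma~\ref{function}. The differences are minor: you derive $\sigma(P_n,K_{1,n-1})=\mathcal{E}(P_n)+2\sqrt{n-1}-4\lambda_1(P_n)$ directly rather than citing it, and you explicitly check the hypothesis $a>4$ of Lemma~\ref{balanced_tree}, which the paper uses tacitly.
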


\begin{proof}
According to the results exposed in Table 2 in \cite{JovStan2012}, it holds:
\begin{equation}\label{distancePK}
\sigma(P_n,K_{1,n-1})=\mathcal{E}(P_n)+2\sqrt{n-1}-4\lambda_1(P_n).
\end{equation}

Let us now compute the spectral distance between $P_n$ and $T_{a,b}$. Having in mind that $P_n$ and $T_{a,b}$ are bipartite graphs, and that, according to Lemma \ref{spectrum}, $T_{a,b}$ has four non-zero eigenvalues, we have:
\begin{equation}\label{auxiliary1}
\sigma(P_n,T_{a,b})=2\,\sum\limits_{i=1}^{2} |\lambda_i(P_n)-\lambda_i(T_{a,b})|+\sum\limits_{i=3}^{n-2} |\lambda_i(P_n)|.
\end{equation} 
Since the eigenvalues of $P_n$ are (see Section 2.6 in \cite{CvRowSim}): $2\,\cos\frac{i\pi}{n+1}$, for $i=1,2,\ldots,n$, it holds: $\lambda_2(P_n)\leq\lambda_1(P_n)\leq 2$. According to Lemma \ref{balanced_tree}, we have $\lambda_1(T_{a,b})\geq \lambda_2(T_{a,b})>2$. Therefore, (\ref{auxiliary1}) becomes:
\begin{equation*}
\sigma(P_n,T_{a,b})=\mathcal{E}(P_n)+2\,(\lambda_1(T_{a,b})+\lambda_2(T_{a,b}))-4\,(\lambda_1(P_n)+\lambda_2(P_n)),
\end{equation*}
that is, taking into account (\ref{eigen_sum}),
\begin{equation}\label{distancePT}
\sigma(P_n,T_{a,b})=\mathcal{E}(P_n)+2\,\sqrt{a+b+2+2\sqrt{ab+a+b}}-4\,(\lambda_1(P_n)+\lambda_2(P_n)).
\end{equation}
Now, from (\ref{distancePK}) and (\ref{distancePT}), we obtain:
\begin{equation}\label{*}
\sigma(P_n,T_{a,b})-\sigma(P_n,K_{1,n-1})=2\,\sqrt{a+b+2+2\sqrt{ab+a+b}}-2\,\sqrt{a+b+2}-4\lambda_2(P_n).
\end{equation}
In order to prove the statement, we will show that $\sigma(P_n,T_{a,b})-\sigma(P_n,K_{1,n-1})>0.$ Let us first show that
$$\sqrt{a+b+2+2\sqrt{ab+a+b}}-\sqrt{a+b+2}>4.$$
According to the assumption, $T_{a,b}$ is balanced, which means that $b=a$ or $b=a+1$. Therefore, 
$$4(ab+a+b)=\left\{
              \begin{array}{ll}
                (a+b+2)^2-4, & \hbox{for\, $b=a$;} \\
                (a+b+2)^2-5, & \hbox{for\, $b=a+1$,}
              \end{array}
            \right.
$$
that is, 
$$2\sqrt{ab+a+b}=\left\{
                   \begin{array}{ll}
                     \sqrt{(a+b+2)^2-4}, & \hbox{for\, $b=a$;} \\
                     \sqrt{(a+b+2)^2-5}, & \hbox{for\, $b=a+1$.}
                   \end{array}
                 \right.
$$
From the last equalities, we can conclude that $2\sqrt{ab+a+b}\geq\sqrt{(a+b+2)^2-5}$. Therefore, we have:
\begin{align*}
& \sqrt{a+b+2+2\sqrt{ab+a+b}}-\sqrt{a+b+2}\geq \\
& \sqrt{a+b+2+\sqrt{(a+b+2)^2-5}}-\sqrt{a+b+2}=F(a+b+2),
\end{align*}
where $F$ is the function defined as in Lemma \ref{function}. Therefore, by use of Lemma \ref{function}, it follows 
$$\sqrt{a+b+2+2\sqrt{ab+a+b}}-\sqrt{a+b+2}>4,$$
for $a+b\geq 92$, i.e. $n\geq 95$. Now, (\ref{*}) reduces to:
$$\sigma(P_n,T_{a,b})-\sigma(P_n,K_{1,n-1})>2\cdot 4-4\lambda_2(P_n)\geq 8-8=0,$$
since $\lambda_2(P_n)\leq 2$, which completes the proof.
\end{proof}

From Theorem \ref{main1}, it follows ${\rm sdiam}(\mathcal{T}_n)\geq\sigma(P_n,T_{a,b})>\sigma(P_n, K_{1,n-1})$, which means that Conjecture \ref{con_trees} is disproved. 

Although Theorem \ref{main1} is proven for $n\geq 95$, some computational results show that the counterexample which disproves Conjecture \ref{con_trees} exists for $n=94$. In that case, the difference between the values of the spectral distances $\sigma(P_n,T_{a,b})$ and $\sigma(P_n, K_{1,n-1})$ is approximately $0.00458$.

\section{Conjecture regarding the spectral diameter of the set of all bipartite graphs}

Throughout this section, we consider bipartite graphs of even order $n$, and in particular bipartite graph $K_{\frac{n}{2},\frac{n}{2}}$, whose spectrum consists of the following eigenvalues (see, for example, Section 2.6 in \cite{CvRowSim}): $\frac{n}{2}$, $[0]^{n-2}$, and $-\frac{n}{2}$. It is very well known that $\lambda_1(K_{\frac{n}{2},\frac{n}{2}})=\frac{n}{2}\geq\lambda_1(G)$, for every other $n$-vertex bipartite graph $G$. 

In order to prove the main result of the section, we need the following auxiliary statements.

\begin{lemma}\label{bipdistance}
The spectral distance between bipartite graphs $G_1$ and $G_2$ of even order $n$ equals:
\begin{equation*}
\sigma(G_1,G_2)=\mathcal{E}(G_1)+\mathcal{E}(G_2)-4\,\sum\limits_{i=1}^{n/2}\min\{\lambda_i(G_1), \lambda_i(G_2)\}.
\end{equation*}
\end{lemma}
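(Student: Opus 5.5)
The plan is to exploit the symmetry of bipartite spectra together with the elementary identity $|x-y| = x+y-2\min\{x,y\}$.

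First, by Theorem 3.2.3 in \cite{CvRowSim}, for $j=1,2$ and every $1\le i\le n$ we have $\lambda_{n+1-i}(G_j)=-\lambda_i(G_j)$. Since $n$ is even, the indices split into the two halves $\{1,\dots,n/2\}$ and $\{n/2+1,\dots,n\}$, and $i\mapsto n+1-i$ is a bijection between them. Hence
\begin{equation*}
\sigma(G_1,G_2)=\sum_{i=1}^{n/2}|\lambda_i(G_1)-\lambda_i(G_2)|+\sum_{i=1}^{n/2}|-\lambda_i(G_1)+\lambda_i(G_2)|=2\sum_{i=1}^{n/2}|\lambda_i(G_1)-\lambda_i(G_2)|.
\end{equation*}
In the same way, $\mathcal{E}(G_j)=2\sum_{i=1}^{n/2}|\lambda_i(G_j)|$.

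Next, I check that $\lambda_i(G_j)\ge 0$ for $i\le n/2$. Suppose $\lambda_{n/2}(G_j)<0$. Then $\lambda_{n/2+1}(G_j)=-\lambda_{n/2}(G_j)>0>\lambda_{n/2}(G_j)$, which contradicts the monotone ordering of the eigenvalues. So the top half of each spectrum is nonnegative, and therefore $\mathcal{E}(G_j)=2\sum_{i=1}^{n/2}\lambda_i(G_j)$.

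Finally, for nonnegative reals we have $|x-y|=x+y-2\min\{x,y\}$. Applying this term by term with $x=\lambda_i(G_1)$ and $y=\lambda_i(G_2)$ gives
\begin{equation*}
\sigma(G_1,G_2)=2\sum_{i=1}^{n/2}\bigl(\lambda_i(G_1)+\lambda_i(G_2)\bigr)-4\sum_{i=1}^{n/2}\min\{\lambda_i(G_1),\lambda_i(G_2)\},
\end{equation*}
and the first sum is exactly $\mathcal{E}(G_1)+\mathcal{E}(G_2)$. This is the claimed formula.

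There is no real obstacle here. The only care needed is the nonnegativity of $\lambda_{n/2}$, since the identity for $|x-y|$ is applied to the eigenvalues themselves rather than their absolute values, and this is why $n$ is assumed to be even.
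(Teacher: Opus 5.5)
Your proof is correct and follows the paper's argument: the symmetry of bipartite spectra halves the sum, and then $|x-y|=x+y-2\min\{x,y\}$ is applied term by term. Your explicit check that $\lambda_i(G_j)\ge 0$ for $i\le n/2$ fills in a step the paper leaves implicit. That check is what lets you identify $2\sum_{i\le n/2}\lambda_i(G_j)$ with $\mathcal{E}(G_j)$; the identity $|x-y|=x+y-2\min\{x,y\}$ itself holds for all reals and does not need it.
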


\begin{proof}
Since the spectra of $G_1$ and $G_2$ are symmetric with respect to the origin, we find:
\begin{align*}
\sigma(G_1,G_2) = &\, 2\,\sum\limits_{i=1}^{n/2}|\lambda_i(G_1)-\lambda_i(G_2)|\\
                = &\, 2\,\sum\limits_{i=1}^{n/2}(\lambda_i(G_1)+\lambda_i(G_2)-2\min\{\lambda_i(G_1), \lambda_i(G_2)\})\\
                = &\, \mathcal{E}(G_1)+\mathcal{E}(G_2)-4\,\sum\limits_{i=1}^{n/2}\min\{\lambda_i(G_1), \lambda_i(G_2)\}.
\end{align*}
\end{proof}

\begin{remark}
The statement given by Lemma \ref{bipdistance} holds in the case of bipartite graphs of odd order, as well.
\end{remark}

\begin{remark}
By Lemma \ref{bipdistance}, for an arbitrary bipartite graph $G$ of even order $n$ it holds:
\begin{equation}\label{sdistmenon}
\sigma(K_{\frac{n}{2},\frac{n}{2}},G)=\mathcal{E}(G)+n-4\lambda_1(G).
\end{equation}
Therefore, on the set $\mathcal{B}_n$ of all connected bipartite graphs of even order $n$:
\begin{equation}\label{remark_secc}
{\rm secc}_{\mathcal{B}_n}\left(K_{\frac{n}{2},\frac{n}{2}}\right)=n+\max\limits_{G\in \mathcal{B}_n}(\mathcal{E}(G)-4\lambda_1(G)).
\end{equation}
\end{remark}

\begin{theorem}
Let $\mathcal{B}_n$ be the set of all connected bipartite graphs of even order $n$. Then 
\begin{equation*}
{\rm secc}_{\mathcal{B}_n}\left(K_{\frac{n}{2},\frac{n}{2}}\right)\leq\left(\frac{n}{2}\right)^{3/2}+\frac{n}{2}.
\end{equation*}
\end{theorem}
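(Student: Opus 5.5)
By (\ref{remark_secc}), it suffices to show that $\mathcal{E}(G)-4\lambda_1(G)\leq \left(\frac{n}{2}\right)^{3/2}-\frac{n}{2}$ for every connected bipartite graph $G$ of even order $n$ with $m$ edges. Write the nonnegative part of the spectrum as $\lambda_1\geq\lambda_2\geq\cdots\geq\lambda_{n/2}\geq 0$. Then
\[
\mathcal{E}(G)=2\sum_{i=1}^{n/2}\lambda_i \quad\text{and}\quad \sum_{i=1}^{n/2}\lambda_i^2=m .
\]
The plan is to bound the energy in terms of $m$ and $\lambda_1$, and then maximize over these two parameters.

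First apply Cauchy--Schwarz to the $\frac{n}{2}-1$ values $\lambda_2,\ldots,\lambda_{n/2}$:
\[
\sum_{i=2}^{n/2}\lambda_i\leq\sqrt{\left(\tfrac{n}{2}-1\right)\left(m-\lambda_1^2\right)} .
\]
This gives
\[
\mathcal{E}(G)-4\lambda_1\leq 2\sqrt{\left(\tfrac n2-1\right)(m-\lambda_1^2)}-2\lambda_1 .
\]
Next use two standard facts. First, $\lambda_1\geq \frac{2m}{n}$, since the index is at least the average degree. Second, $m\leq \frac{n^2}{4}$, since the graph is bipartite on $n$ vertices. The right-hand side is decreasing in $\lambda_1$, so we may substitute $\lambda_1=\frac{2m}{n}$. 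Writing $k=\frac{n}{2}$, this yields
\[
\mathcal{E}(G)-4\lambda_1\leq h(m):=2\sqrt{(k-1)\left(m-\tfrac{m^2}{k^2}\right)}-\tfrac{2m}{k}.
\]

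It remains to show $h(m)\leq k^{3/2}-k$ for $0\le m\le k^2$. This is a one-variable calculus problem. One option is to bound the square root crudely: use $(k-1)\leq k$, and then either complete the square in $m$ or set $t=m/k^2\in[0,1]$. Then $h\leq 2k^{3/2}\sqrt{t(1-t)}-2kt$, and we need $\max_t\big(2\sqrt{t(1-t)}\,k^{3/2}-2kt\big)\leq k^{3/2}-k$.

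This is the main obstacle. At $t=\frac12$ the bound gives $k^{3/2}-k$ exactly. However, for large $k$ the maximum sits slightly below $t=\frac12$, and there the crude bound overshoots by a lower-order term. So the crude version fails, and the factor $\sqrt{k-1}$ must be kept. With $\sqrt{k-1}$ in place, I would maximize the expression exactly. The map $t\mapsto A\sqrt{t(1-t)}-Bt$ has maximum $\frac{\sqrt{A^2+B^2}-B}{2}$. Here $A=2k\sqrt{k-1}$ and $B=2k$, so the maximum equals $k\sqrt{k}-k=k^{3/2}-k$ exactly.

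Substituting back into (\ref{remark_secc}) gives ${\rm secc}\leq n+k^{3/2}-k=\left(\frac n2\right)^{3/2}+\frac n2$, which is the claimed bound. This last computation relies on the closed form of that maximum, which follows from the trigonometric substitution $t=\sin^2\theta$.
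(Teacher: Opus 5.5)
Your proof is correct and follows essentially the same route as the paper: Cauchy--Schwarz on $\lambda_2,\ldots,\lambda_{n/2}$, then a lower bound on the index in terms of $m$ to reduce to one variable, then an exact one-variable maximization with value $\left(\frac{n}{2}\right)^{3/2}-\frac{n}{2}$. The differences are cosmetic. You use $\lambda_1\geq 2m/n$ directly, where the paper uses $\sigma_1\geq m/\sqrt{ab}$ and then weakens it via $ab\leq n^2/4$; you eliminate $\lambda_1$ by monotonicity rather than eliminating $m$; and under $\sigma_1=m/k$ your $h$ is exactly the paper's $F(\sigma_1)$, which you maximize via $t=\sin^2\theta$ instead of by differentiation.
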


\begin{proof}
Let $G\in \mathcal{B}_n$ be an arbitrary bipartite graph with the bipartition $(X,Y)$, where $|X|=a$ and $|Y|=b$, $a,b\geq 1$, and let $l=\min\{a,b\}$. In order to estimate the value of ${\rm secc}_{\mathcal{B}_n}\left(K_{\frac{n}{2},\frac{n}{2}}\right)$, based on (\ref{remark_secc}), we should estimate the quantity $\mathcal{E}(G)-4\lambda_1(G)$ on the set $\mathcal{B}_n$.  

Let us denote by $B$ the biadjacency matrix of $G$ whose dimension is $a\times b$, while by $m$ the number of its edges. As we have already noticed in the introductory part, the non-zero eigenvalues $\pm\sigma_1,\ldots,$ $\pm\sigma_l$ of $G$ are the singular values $\sigma_1\geq\cdots\geq\sigma_l$ of its biadjacency matrix $B$, and it holds $\sum\limits_{i}\sigma_i^2=m$. For a non-zero vector $x\in\mathbb{R}^b$, we have \cite{norm}:
$$\sigma_1\geq\frac{\|Bx\|_2}{\|x\|_2},$$
where $\|\cdot\|_2$ is the  Euclidean norm. For all-1 vector $\textbf{j}\in\mathbb{R}^b$, we further find:
$$\frac{\|B\,\textbf{j}\|_2}{\|\textbf{j}\|_2}=\frac{\sqrt{r_1^2+r_2^2+\cdots+r_a^2}}{\sqrt{b}},$$
where $r_i$, for $1\leq i\leq a$, denote the $i$-th row sum of the matrix $B$. By applying the Cauchy–Schwarz inequality from the last equality we finally get:
\begin{equation}\label{rel1}
\sigma_1\geq \frac{m}{\sqrt{ab}}.
\end{equation} 
Bearing in mind that $\sum\limits_{i=1}^{l} \sigma_i^2=m$, and by applying the Cauchy–Schwarz inequality to $\sigma_2,\ldots,\sigma_l$, we obtain:
\begin{equation}\label{rel2}
\mathcal{E}(G)-4\lambda_1(G)=2\,\sum\limits_{i=1}^{l} \sigma_i-4\sigma_1\leq2\,\sqrt{(m-\sigma_1^2)(l-1)}-2\sigma_1.
\end{equation}
Using (\ref{rel1}), i.e. $m\leq\sigma_1\cdot\sqrt{ab}$, (\ref{rel2}) reduces to:
\begin{equation}\label{rel3}
\mathcal{E}(G)-4\lambda_1(G)\leq2\,\sqrt{\sigma_1(\sqrt{ab}-\sigma_1)(l-1)}-2\sigma_1.
\end{equation}
Since $ab\leq\frac{n^2}{4}$ and $l=\min\{a,b\}\leq\frac{n}{2}$, (\ref{rel3}) further reduces to:
\begin{equation*}
\mathcal{E}(G)-4\lambda_1(G)\leq2\,\sqrt{\sigma_1\left(\frac{n}{2}-\sigma_1\right)\left(\frac{n}{2}-1\right)}-2\sigma_1. 
\end{equation*}
Let us denote by $F(\sigma_1)=2\,\sqrt{\sigma_1\left(\frac{n}{2}-\sigma_1\right)\left(\frac{n}{2}-1\right)}-2\sigma_1$, for $\sigma_1\in\left[0,\frac{n}{2}\right]$, and let us determine the maximum value of the function $F$ in the interval $\left[0,\frac{n}{2}\right]$.

Since $F^{\prime}(\sigma_1)=\frac{\left(\frac{n}{2}-1\right)\left(\frac{n}{2}-2\sigma_1\right)}{\sqrt{\sigma_1\left(\frac{n}{2}-\sigma_1\right)\left(\frac{n}{2}-1\right)}}-2$, the function $F$ has one stationary point $\frac{n-\sqrt{2n}}{4}$ in the interval $\left[0,\frac{n}{2}\right]$. It holds: $F(0)=0$, $F(\frac{n}{2})=-n$ and $F\left(\frac{n-\sqrt{2n}}{4}\right)=\frac{n}{2}\left(\sqrt{\frac{n}{2}}-1\right)$, wherefrom we conclude that $F$ in the interval $\left[0,\frac{n}{2}\right]$ attains the maximum in $\frac{n-\sqrt{2n}}{4}$, and that it is equal to $\left(\frac{n}{2}\right)^{3/2}-\frac{n}{2}$.

Now, since $\mathcal{E}(G)-4\lambda_1(G)\leq\left(\frac{n}{2}\right)^{3/2}-\frac{n}{2}$, the proof follows from (\ref{remark_secc}).
\end{proof}

In the following, we will determine graph $G$ such that $\sigma\left(K_{\frac{n}{2},\frac{n}{2}},G\right)=\left(\frac{n}{2}\right)^{3/2}+\frac{n}{2}$, i.e. $\sigma\left(K_{\frac{n}{2},\frac{n}{2}},G\right)={\rm secc}_{\mathcal{B}_n}\left(K_{\frac{n}{2},\frac{n}{2}}\right)$, where $\mathcal{B}_n$, as before, is the set of all connected bipartite graphs of even order $n$. To do this, we need to remember some definitions.

A \emph{design} $\mathcal{D}$ \cite{combinatorics} consists of a set of $v$ points and $b$ subsets of the set of these points called \emph{blocks} such that there are $k$ points per block, $r$ blocks per point and $\lambda^{\ast}$ blocks through any two distinct points. The integers $(v,b,k,r,\lambda^{\ast})$ are \emph{parameters} of the design, and they satisfy: $vr=bk$, $\lambda^{\ast}(v-1)=r(k-1)$ and $b\geq v$. If $b=v$, or equivalently, if $r=k$, the design $\mathcal{D}$ is \emph{symmetric}. A \emph{$t$-design} is a design $\mathcal{D}$ such that each set of $t$ points is in the same number $\lambda^{\ast}_t>0$ of blocks. 

Given a design $\mathcal{D}$, the incidence graph $\Gamma(\mathcal{D})$ of the design is formed as follows \cite{CvDoobSachs}: vertices of $\Gamma(\mathcal{D})$ correspond to the points and blocks of the design, so $\Gamma(\mathcal{D})$ is of order $b+v$; two vertices in $\Gamma(\mathcal{D})$ are adjacent if and only if one corresponds to a block and the other corresponds to a point contained in that block. It is clear that $\Gamma(\mathcal{D})$ is bipartite and bidegreed (i.e. each vertex is of degree $r$ or $k$). The spectrum of the graph $\Gamma(\mathcal{D})$ is discussed and determined in \cite{CvDoobSachs}: $\pm\sqrt{rk}$, $[\pm\sqrt{r-\lambda^{\ast}}]^{v-1}$ and $[0]^{b-v}$. In the following, we are interested in symmetric 2-designs with parameters $(4u^2,2u^2-u,u^2-u)$, which are called Menon designs \cite{Menon1962}. The incidence graphs of symmetric designs are bipartite distance-regular graphs of diameter 3, whose eigenvalues are equal to $\pm r$ and $\pm[\sqrt{r-\lambda^{\ast}}]^{v-1}$.

\begin{lemma}\label{sdistKD}
Let $\Gamma(\mathcal{D})$ be the $n$-vertex incidence graph of a Menon 2-design with the parameters $(4u^2,2u^2-u,u^2-u)$. Then:
\begin{equation*}
\sigma\left(K_{\frac{n}{2},\frac{n}{2}},\Gamma(\mathcal{D})\right)=\left(\frac{n}{2}\right)^{3/2}+\frac{n}{2}.
\end{equation*}
\end{lemma}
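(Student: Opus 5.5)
The plan is to apply formula (\ref{sdistmenon}) from the Remark following Lemma \ref{bipdistance}: for any bipartite graph $G$ of even order $n$,
$$\sigma\left(K_{\frac{n}{2},\frac{n}{2}},G\right)=\mathcal{E}(G)+n-4\lambda_1(G).$$
So it suffices to compute the energy and the index of $\Gamma(\mathcal{D})$ from the spectrum recalled just before the lemma. Note also that $\Gamma(\mathcal{D})$ is connected, being a distance-regular graph of diameter $3$, so it lies in $\mathcal{B}_n$. Since the design is symmetric, $b=v=4u^2$, so $n=2v=8u^2$ and $\frac{n}{2}=4u^2$. Moreover $r=k=2u^2-u$ and $\lambda^{\ast}=u^2-u$.

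Next I compute the spectrum explicitly. The index is $\lambda_1(\Gamma(\mathcal{D}))=\sqrt{rk}=r=2u^2-u$. The remaining non-zero eigenvalues are $\pm\sqrt{r-\lambda^{\ast}}=\pm\sqrt{u^2}=\pm u$, each with multiplicity $v-1=4u^2-1$. There are no zero eigenvalues, since $b-v=0$. Hence
$$\mathcal{E}(\Gamma(\mathcal{D}))=2(2u^2-u)+2u(4u^2-1)=8u^3+4u^2-4u.$$

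Substituting into (\ref{sdistmenon}) gives
$$\sigma=8u^3+4u^2-4u+8u^2-4(2u^2-u)=8u^3+4u^2.$$
Finally, $\left(\frac{n}{2}\right)^{3/2}=(4u^2)^{3/2}=8u^3$ and $\frac{n}{2}=4u^2$ (taking $u>0$), so $\sigma=\left(\frac{n}{2}\right)^{3/2}+\frac{n}{2}$, as claimed.

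There is no real obstacle here beyond bookkeeping. The step needing most care is extracting the parameters correctly: the triple $(4u^2,2u^2-u,u^2-u)$ is $(v,k,\lambda^{\ast})$, with symmetry forcing $b=v$ and $r=k$. One should also check that $\lambda^{\ast}(v-1)=r(k-1)$ holds, which confirms the reading of the parameters: $(u^2-u)(4u^2-1)=(2u^2-u)(2u^2-u-1)$, and both sides equal $u(u-1)(2u-1)(2u+1)$.
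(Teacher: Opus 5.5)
Your proposal is correct and follows the paper's own route: plug $n=8u^2$, $\lambda_1(\Gamma(\mathcal{D}))=2u^2-u$ and the eigenvalues $\pm u$, each of multiplicity $4u^2-1$, into (\ref{sdistmenon}) to get $8u^3+4u^2=\left(\frac{n}{2}\right)^{3/2}+\frac{n}{2}$. Your check of $\lambda^{\ast}(v-1)=r(k-1)$ and your remark on connectivity are harmless extras that the identity itself does not need.
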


\begin{proof}
According to the assumption of the statement, $n=8u^2$, so by use of (\ref{sdistmenon}), we obtain:
\begin{align*}
\sigma\left(K_{4u^2,4u^2},\Gamma(\mathcal{D})\right) = &\, \mathcal{E}(\Gamma(\mathcal{D}))+8u^2-4\lambda_1(\Gamma(\mathcal{D}))\\
                                                     = &\, 2\,(2u^2-u)+2u\,(4u^2-1)+8u^2-4\,(2u^2-u)\\
                                                     = &\, 4u^2+8u^3,
\end{align*}
i.e. $\sigma\left(K_{\frac{n}{2},\frac{n}{2}},\Gamma(\mathcal{D})\right)=\left(\frac{n}{2}\right)^{3/2}+\frac{n}{2}$.
\end{proof}

Let us recall that a \emph{bipartite chain graph} (or a double nested graph) is a bipartite graph such that the neighborhoods of the vertices in each set from the bipartition are nested in a chain, i.e. form a chain with respect to set inclusion. For $1\leq p<\frac{n}{2}$, let $B_{n,p}$ denote the connected bipartite graph of order $n$ with the bipartition $(X,Y)$, where $X=\{x_1,x_2,\ldots,x_{\frac{n}{2}}\}$ and $Y=\{y_1,y_2,\ldots,y_{\frac{n}{2}}\}$, in which $x_i$, for $i\leq p$, is adjacent to all of $y_1,y_2,\ldots,y_{\frac{n}{2}}$, and $x_i$, for $i>p$, is adjacent to $y_1,y_2,\ldots,y_p$. It can be noticed that $B_{n,p}$ is a bidegreed graph. Graph $B_{12,4}$ is depicted in Figure \ref{chainFig}.

\begin{figure}[h]
%\vspace{-10pt}
\centering
\includegraphics[scale=1]{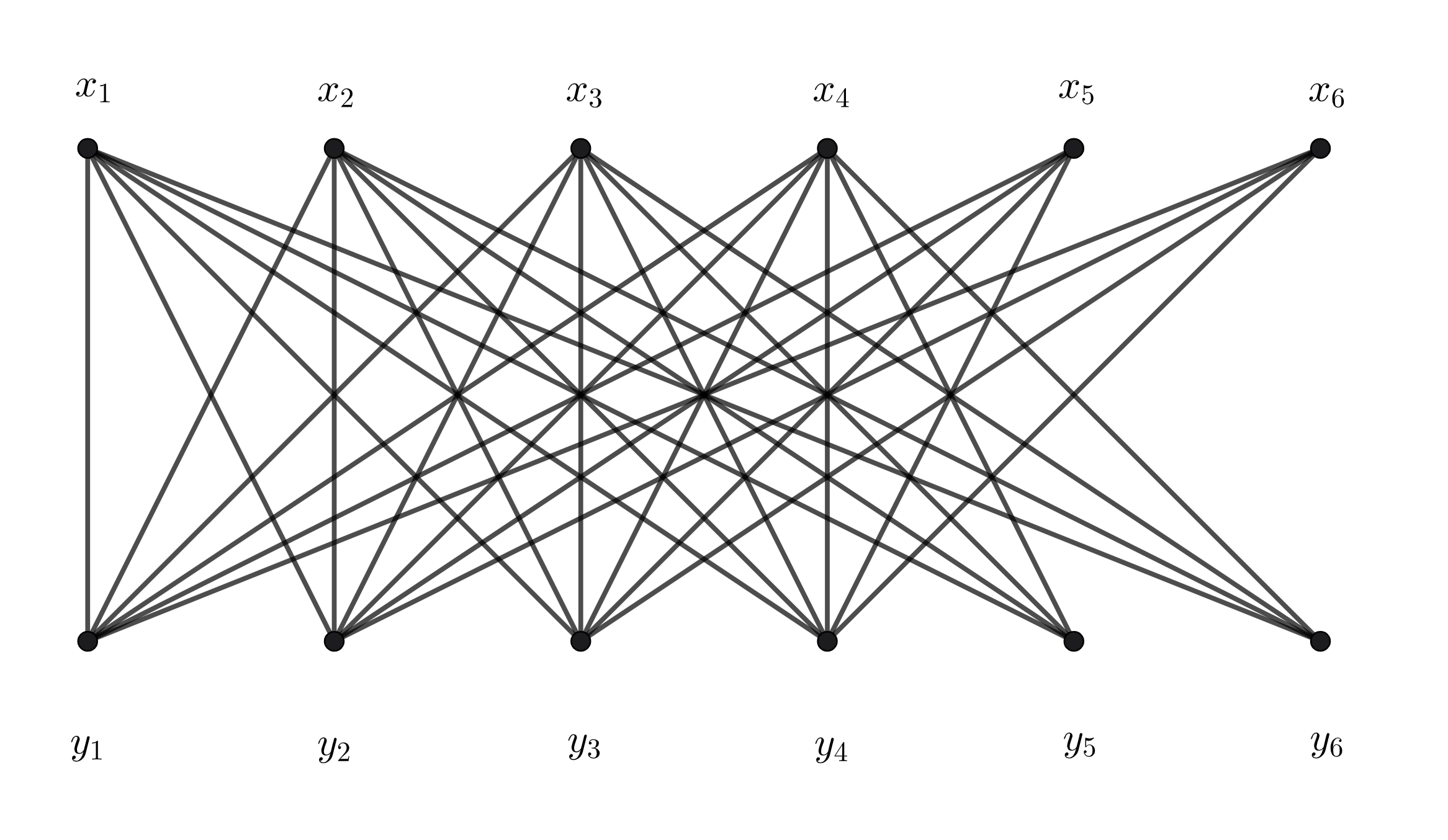}
\vspace{10pt}
\caption{Bipartite graph $B_{12,4}$}
\label{chainFig}
\end{figure}
%\vspace{15pt}

Before we compute the spectrum of the graph $B_{n,p}$, let us recall certain definitions and statements and make some observations which we will need in the proof of the corresponding theorem.

The multiplicity of the eigenvalue zero in the spectrum of $G$, denoted by $\eta=\eta(G)$, is the \emph{nullity} of the graph $G$. If $r(A(G))$ is the rank of the matrix $A=A(G)$, then clearly, $\eta(G)=n-r(A(G))$. The \emph{rank} of a graph $G$ is the rank of its adjacency matrix $A(G)$, denoted by $r(G)$. Then, $\eta(G)=n-r(G)$. It is known \cite{Higgins} that for the bipartite graph $G$ with $n$ vertices and the biadjacency matrix $B$, $\eta(G)=n-2r(B)$ holds, where $r(B)$ is the rank of the matrix $B$. From the definition of the graph $B_{n,p}$ it is obvious that the rank of its biadjacency matrix is equal to $2$, which implies $\eta(B_{n,p})=n-4$, i.e. $B_{n,p}$ has four non-zero eigenvalues.

Let $G$ be a graph with the vertex set $V(G)$. The partition $V(G)=V_1\,\dot{\cup}\,V_2\,\dot{\cup}\cdots\dot{\cup}\,V_k$, where $\dot{\cup}$ stands for the disjoint union, is an \emph{equitable partition} if every vertex in $V_i$ has the same number of neighbours in $V_j$, for all $i,j\in\{1, 2, \ldots, k\}$ (for more details see e.g. \cite{CvRowSim}, p. 83).

\begin{corollary}\label{coll} (Corollary 1.3.13 from \cite{CvRowSim})
Let $A$ be a real symmetric matrix with spectrum $\lambda_1\geq\lambda_2\geq\cdots\geq\lambda_n$. Given a partition $\{1, 2,\ldots, n\}=\Delta_1\,\dot{\cup}\,\Delta_2\,\dot{\cup}\cdots\dot{\cup}\,\Delta_m$ with $|\Delta_i|=n_i>0$, consider the corresponding blocking $A=(A_{ij})$, where $A_{ij}$ is an $n_i\times n_j$ block. Let $e_{ij}$ be the sum of the entries in $A_{ij}$ and set $C=(e_{ij}/n_i)$ (note that $e_{ij}/n_i$ is the average row sum in $A_{ij}$). Then the eigenvalues of $C$ interlace those of $A$.
\end{corollary}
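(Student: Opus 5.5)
The plan is to reduce the statement to the classical Cauchy interlacing theorem for compressions of a symmetric matrix. The only subtlety is that $C$ itself need not be symmetric. So the first step is to exhibit a symmetric matrix similar to $C$ that arises as an orthogonal compression of $A$.

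First, I introduce the $n\times m$ characteristic matrix $S$ of the partition, where $S_{kj}=1$ if $k\in\Delta_j$ and $0$ otherwise. I also set $D=\mathrm{diag}(n_1,\ldots,n_m)=S^TS$. A direct check gives $(S^TAS)_{ij}=e_{ij}$, hence $C=D^{-1}S^TAS$. Now put $Q=SD^{-1/2}$. Its columns are orthonormal, since $Q^TQ=D^{-1/2}S^TSD^{-1/2}=I_m$. Then
$$Q^TAQ=D^{-1/2}S^TASD^{-1/2}=D^{1/2}\,C\,D^{-1/2},$$
so $C$ is similar to the real symmetric matrix $M=Q^TAQ$. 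Consequently $C$ has real eigenvalues $\mu_1\geq\cdots\geq\mu_m$, namely those of $M$.

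Second, I prove the interlacing $\lambda_{n-m+i}\leq\mu_i\leq\lambda_i$ for $i=1,\ldots,m$ using the Courant--Fischer min-max characterization. For the upper bound, write
$$\mu_i=\max_{\dim U=i}\ \min_{0\neq y\in U}\frac{y^TMy}{y^Ty},$$
where $U$ ranges over subspaces of $\mathbb{R}^m$. Since $Q^TQ=I_m$, the map $y\mapsto Qy$ is injective, so $QU$ is an $i$-dimensional subspace of $\mathbb{R}^n$. It also preserves the Rayleigh quotient: $\frac{y^TMy}{y^Ty}=\frac{(Qy)^TA(Qy)}{(Qy)^T(Qy)}$. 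Hence $\mu_i$ is at most the max--min of the Rayleigh quotient of $A$ over $i$-dimensional subspaces of $\mathbb{R}^n$, which equals $\lambda_i$. For the lower bound, I apply the same argument to $-A$, whose compression is $-M$ with eigenvalues $-\mu_m\geq\cdots\geq-\mu_1$. The $(m+1-i)$-th largest eigenvalue of $-M$ is $-\mu_i$, and the $(m+1-i)$-th largest eigenvalue of $-A$ is $-\lambda_{n-m+i}$. The upper bound for $-A$ therefore gives $-\mu_i\leq-\lambda_{n-m+i}$, that is, $\lambda_{n-m+i}\leq\mu_i$.

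The main obstacle is really only the first step: recognizing that the nonsymmetric averaged matrix $C$ is similar to an isometric compression of $A$. Once the normalization $Q=SD^{-1/2}$ is chosen, the interlacing is the standard min-max argument and involves only routine verification.
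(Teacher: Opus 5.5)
Your proof is correct and complete. The similarity $D^{1/2}CD^{-1/2}=Q^{T}AQ$ with $Q=SD^{-1/2}$ and $Q^{T}Q=I_m$, together with the Courant--Fischer argument applied to $A$ and to $-A$, gives exactly $\lambda_{n-m+i}\leq\mu_i\leq\lambda_i$. The paper gives no proof of its own: it quotes the result from Cvetkovi\'c--Rowlinson--Simi\'c, and your argument is the standard derivation of it there (reduction to interlacing for compressions $Q^{T}AQ$ with orthonormal columns), except that you also prove that interlacing step rather than citing it.
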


\begin{theorem}\label{Theorem 1.3.14} (Theorem 1.3.14 from \cite{CvRowSim})
Let $A$ be any matrix partitioned into blocks, as in Corollary \ref{coll}. Let us suppose that the block $A_{ij}$ has constant row sums $c_{ij}$, and let $C=(c_{ij})$. Then the spectrum of $C$ is contained in the spectrum of $A$ (taking into account the multiplicities of the eigenvalues).
\end{theorem}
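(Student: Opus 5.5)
The plan is to encode the partition by its characteristic matrix and show that the constant-row-sum hypothesis makes the column space of this matrix an $A$-invariant subspace on which $A$ acts as $C$. Let $S$ be the $n\times m$ matrix whose $j$-th column is the indicator vector of $\Delta_j$: $S_{kj}=1$ if $k\in\Delta_j$ and $S_{kj}=0$ otherwise. Since the $\Delta_j$ are nonempty and pairwise disjoint, the columns of $S$ have disjoint nonempty supports. Hence they are linearly independent and $S$ has full column rank $m$.

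The first step is the identity $AS=SC$. Take a row index $k\in\Delta_i$. Then $(AS)_{kj}$ is the sum of the entries of row $k$ of $A$ in the columns indexed by $\Delta_j$. This is the $k$-th row sum of the block $A_{ij}$, which by hypothesis equals $c_{ij}$. On the other side, $(SC)_{kj}=\sum_l S_{kl}c_{lj}=c_{ij}$, because the only nonzero entry of row $k$ of $S$ is in column $i$. So $AS=SC$. In particular, if $Cv=\lambda v$ with $v\neq 0$, then $A(Sv)=\lambda Sv$ with $Sv\neq 0$, so every eigenvalue of $C$ is an eigenvalue of $A$.

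The main point is the multiplicities, which the eigenvector argument alone does not give when $C$ has repeated eigenvalues. Note that $C$ need not be symmetric, so I would not rely on diagonalizability of $C$. Instead, let $W$ be the column space of $S$. The identity $AS=SC$ says $W$ is $A$-invariant, and in the basis of the columns of $S$ the restriction $A|_W$ has matrix $C$. Extend these columns to a basis of $\mathbb{R}^n$ and let $Q=[\,S\;\; T\,]$ be the resulting invertible matrix. Then
\begin{equation*}
Q^{-1}AQ=\begin{pmatrix} C & *\\ 0 & D\end{pmatrix}
\end{equation*}
for some square matrix $D$ of order $n-m$. Therefore $\det(A-xI_n)=\det(C-xI_m)\det(D-xI_{n-m})$, so the characteristic polynomial of $C$ divides that of $A$. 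This gives containment of the spectrum of $C$ in the spectrum of $A$ with multiplicities counted. Nothing beyond this block-triangular step should require care; the rest is bookkeeping with the indicator matrix.
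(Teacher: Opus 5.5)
Your proof is correct and complete. The identity $AS=SC$ is verified entrywise, and $S$ has full column rank because the $\Delta_j$ are nonempty and disjoint. Completing the columns of $S$ to a basis then gives $Q^{-1}AQ=\begin{pmatrix} C & *\\ 0 & D\end{pmatrix}$, so $\det(C-xI_m)$ divides $\det(A-xI_n)$, which is exactly containment with algebraic multiplicities.

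The paper gives no proof of this statement, so there is nothing in the paper to compare against. It is quoted from \cite{CvRowSim} and used only in Lemma \ref{specB_np}, where the four roots of $q(x)$ are distinct, so multiplicities play no role there. Your invariant-subspace argument is the standard proof of this fact.

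You were right not to rely on diagonalizability: that is what makes your argument cover the ``any matrix'' of the statement. In the symmetric case relevant to the paper there is a shortcut. Since $S^TAS=S^TSC=\mathrm{diag}(n_1,\dots,n_m)\,C$, the matrix $C$ is similar to a symmetric matrix. Lifting a basis of eigenvectors of $C$ via $v\mapsto Sv$ would then already give the multiplicities. Your block-triangular route needs no such assumption and is no longer.
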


\begin{lemma}\label{specB_np}
Let $n\geq 4$ be an even integer and $1\leq p<\frac{n}{2}$. The spectrum of the bipartite chain graph $B_{n,p}$ has exactly four non-zero eigenvalues as follows: $\lambda_1(B_{n,p})=\frac{1}{2}\left(p+\sqrt{p\left(2n-3p\right)}\right)$, $\lambda_2(B_{n,p})=\frac{1}{2}\left(-p+\sqrt{p\left(2n-3p\right)}\right)$, $\lambda_{n-1}(B_{n,p})=-\lambda_2(B_{n,p})$ and $\lambda_n(B_{n,p})=-\lambda_1(B_{n,p})$. Furthermore, the two largest eigenvalues satisfy the
following identity:
\begin{equation*}
\lambda_1(B_{n,p})+\lambda_2(B_{n,p})=\sqrt{p\left(2n-3p\right)}.
\end{equation*} 
\end{lemma}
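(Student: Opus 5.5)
We use an equitable partition into four cells, $X_1=\{x_1,\dots,x_p\}$, $X_2=\{x_{p+1},\dots,x_{n/2}\}$, $Y_1=\{y_1,\dots,y_p\}$, $Y_2=\{y_{p+1},\dots,y_{n/2}\}$, and then Theorem \ref{Theorem 1.3.14}. In $B_{n,p}$ each vertex of $X_1$ has $p$ neighbours in $Y_1$ and $\frac{n}{2}-p$ in $Y_2$. Each vertex of $X_2$ has $p$ neighbours in $Y_1$ and none in $Y_2$. Each vertex of $Y_1$ has $p$ neighbours in $X_1$ and $\frac{n}{2}-p$ in $X_2$, and each vertex of $Y_2$ has $p$ neighbours in $X_1$ and none in $X_2$. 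So the partition is equitable. With the ordering $(X_1,X_2,Y_1,Y_2)$ the quotient matrix is
$$C=\begin{pmatrix}0&0&p&\frac n2-p\\ 0&0&p&0\\ p&\frac n2-p&0&0\\ p&0&0&0\end{pmatrix}.$$
By Theorem \ref{Theorem 1.3.14}, the spectrum of $C$ is contained in the spectrum of $A(B_{n,p})$, multiplicities included.

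Next we compute the characteristic polynomial of $C$. Write $C=\begin{pmatrix}0&M\\ N&0\end{pmatrix}$ with $M=\begin{pmatrix}p&\frac n2-p\\ p&0\end{pmatrix}$ and $N=M^T$. Then $\det(C-xI)=\det(x^2I-MM^T)$. Set $q=\frac n2-p$, so that
$$MM^T=\begin{pmatrix}p^2+q^2&p^2\\ p^2&p^2\end{pmatrix},\qquad \operatorname{tr}(MM^T)=2p^2+q^2,\qquad \det(MM^T)=p^2q^2.$$
The characteristic polynomial is therefore $x^4-(2p^2+q^2)x^2+p^2q^2=(x^2-qx-p^2)(x^2+qx-p^2)$. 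Factoring this quartic by hand is the only step needing care. One checks $(x^2-p^2)^2-q^2x^2=x^4-(2p^2+q^2)x^2+p^2q^2$. The roots are $\pm\frac{q\pm\sqrt{q^2+4p^2}}{2}$ with all sign combinations. Since $q>0$, they are four distinct nonzero numbers. Writing them in terms of $n$ and $p$, substituting $q=\frac n2-p$ into $q^2+4p^2$ and simplifying gives the stated expression under the root, which is claimed to be $p(2n-3p)$.

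\emph{Caveat.} A quick check suggests this simplification is where the stated formula should be verified carefully. The discriminant structure could instead be symmetric in a different parametrisation, and I would recheck the neighbour counts. In particular, $y_j$ for $j\le p$ is adjacent to all of $X$, giving $p$ neighbours in $X_1$ and $\frac n2-p$ in $X_2$. I would double-check against the small example $B_{12,4}$ numerically before finalising.

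Finally, the nullity argument before the lemma gives $\eta(B_{n,p})=n-4$. So $B_{n,p}$ has exactly four nonzero eigenvalues, and these must be the four nonzero eigenvalues of $C$. Ordering them, and using the symmetry of the spectrum of a bipartite graph, gives $\lambda_1$, $\lambda_2$, $\lambda_{n-1}=-\lambda_2$ and $\lambda_n=-\lambda_1$. Adding the two positive roots gives the sum identity, because the $\pm p$ (respectively $\pm q$) terms cancel.
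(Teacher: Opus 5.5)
Your strategy is the paper's: the four-cell equitable partition, Theorem \ref{Theorem 1.3.14}, and the nullity $\eta(B_{n,p})=n-4$. Your quotient matrix $C$ is also correct. But the computation of its characteristic polynomial, which is the step carrying all the content, is wrong, so your argument does not reach the stated eigenvalues.

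The quotient matrix of an equitable partition is not symmetric in general. Here $c_{14}=\frac n2-p$ while $c_{41}=p$, and $c_{23}=p$ while $c_{32}=\frac n2-p$. Reading the blocks off your own $C$, the lower-left block is $N=M$, not $M^T$. Hence $\det(C-xI)=\det(x^2I-M^2)$, with $\operatorname{tr}(M^2)=p^2+2pq=p(n-p)$ and $\det(M^2)=p^2q^2$.

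Your polynomial $x^4-(2p^2+q^2)x^2+p^2q^2$ fails the check $s_2=2m$. The squares of the two positive eigenvalues must sum to the number of edges $m=p(n-p)$; for $B_{12,4}$ this is $32$, whereas $2p^2+q^2=36$. Two further steps are also false:
\begin{itemize}
\item $(x^2-p^2)^2-q^2x^2$ has constant term $p^4$, not $p^2q^2$, so your factorisation is not an identity unless $p=q$.
\item $q^2+4p^2=\frac{n^2}{4}-np+5p^2$, which is not $p(2n-3p)$ in general.
\end{itemize}
Your caveat was well placed: the numerical check on $B_{12,4}$ you proposed would have exposed all of this.

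The repair is short. We have $x^4-(p^2+2pq)x^2+p^2q^2=(x^2-pq)^2-p^2x^2=(x^2-px-pq)(x^2+px-pq)$. Its roots are $\pm\frac{p\pm\sqrt{p^2+4pq}}{2}$, and $p^2+4pq=p^2+2p(n-2p)=p(2n-3p)$. Since $pq>0$ and $p(2n-3p)>p^2$ (equivalent to $p<\frac n2$), these are four distinct nonzero reals. The positive ones are $\frac12\left(\pm p+\sqrt{p(2n-3p)}\right)$, and their sum is $\sqrt{p(2n-3p)}$ because the $\pm\frac p2$ terms cancel. With this, your nullity argument and the bipartite symmetry finish the proof as you describe.

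Alternatively, your $N=M^T$ shortcut becomes legitimate after symmetrising the quotient to entries $\sqrt{c_{ij}c_{ji}}$, which preserves the spectrum. That amounts to replacing $M$ by $\begin{pmatrix}p&\sqrt{pq}\\ \sqrt{pq}&0\end{pmatrix}$.
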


\begin{proof}
As we previously noted in the text above, $\eta(B_{n,p})=n-4$, which means that the spectrum of $B_{n,p}$ contains eigenvalue zero of the multiplicity $n-4$. 

The partition $V(B_{n,p})=V_1\,\dot{\cup}\,V_2\,\dot{\cup}\,V_3\,\dot{\cup}\,V_4$ of the vertex set $V(B_{n,p})$ of the graph $B_{n,p}$, where $V_1=\{x_1,\ldots,x_p\}$, $V_2=\{x_{p+1},\ldots,x_{\frac{n}{2}}\}$, $V_3=\{y_1,\ldots,y_p\}$ and $V_4=\{y_{p+1},\ldots,y_{\frac{n}{2}}\}$, is an equitable partition with the following quotient matrix:
$$Q=\left(
      \begin{array}{cccc}
        0 & 0 & p & \frac{n}{2}-p \\
        0 & 0 & p & 0 \\
        p & \frac{n}{2}-p & 0 & 0 \\
        p & 0 & 0 & 0 \\
      \end{array}
    \right).
$$
The characteristic polynomial of the matrix $Q$ is: 
$$q(x)=p^2\left(\frac{n}{2}-p\right)^2+p(p-n)x^2+x^4.$$
Since, according to Theorem \ref{Theorem 1.3.14}, the roots of the polynomial $q(x)$, i.e. the eigenvalues of the matrix $Q$ are also the eigenvalues of the graph $B_{n,p}$, the first part of the statement is proved. 

The second part of the proof can be obtained by direct computation.
\end{proof}

Now, we are ready for the main statement of the section.

\begin{theorem}\label{main_bip}
Let $\Gamma(\mathcal{D})$ be the $n$-vertex incidence graph of a Menon 2-design with the parameters $(4u^2,2u^2-u,u^2-u)$, $u\geq 4$, and let $B_{n,p}$ be the $n$-vertex bipartite chain graph, where $p=\frac{n}{3}$, if $3\,|\,u$, and $p=\frac{n}{3}+\frac{1}{3}$, otherwise. Then 
\begin{equation*}
\sigma(\Gamma(\mathcal{D}),B_{n,p})>\sigma(K_{\frac{n}{2},\frac{n}{2}},\Gamma(\mathcal{D})).
\end{equation*}
\end{theorem}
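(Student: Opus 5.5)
The plan is to compute both spectral distances in closed form through Lemma \ref{bipdistance} and compare them. Put $n=8u^2$. The incidence graph of the Menon design has $r=k=2u^2-u$ and $r-\lambda^{\ast}=u^2$. Hence the nonnegative half of its spectrum is $2u^2-u$ together with $[u]^{4u^2-1}$, and
\[
\mathcal{E}(\Gamma(\mathcal{D}))=8u^3+4u^2-4u .
\]
From Lemma \ref{sdistKD}, equivalently from (\ref{sdistmenon}), we get $\sigma(K_{\frac{n}{2},\frac{n}{2}},\Gamma(\mathcal{D}))=\mathcal{E}(\Gamma(\mathcal{D}))+4u$. For $B_{n,p}$ we write $S=\sqrt{p(2n-3p)}$. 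By Lemma \ref{specB_np}, its only positive eigenvalues are $\lambda_1(B_{n,p})=\frac{S+p}{2}$ and $\lambda_2(B_{n,p})=\frac{S-p}{2}$, and $\mathcal{E}(B_{n,p})=2S$.

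Lemma \ref{bipdistance} gives
\[
\sigma(\Gamma(\mathcal{D}),B_{n,p})=\mathcal{E}(\Gamma(\mathcal{D}))+2S-4\sum_{i=1}^{n/2}\min\{\lambda_i(\Gamma(\mathcal{D})),\lambda_i(B_{n,p})\}.
\]
For $i\ge 3$ the minimum is $0$, because $\lambda_i(B_{n,p})=0$ there. I will show that $\lambda_1(B_{n,p})\ge 2u^2-u$ and $\lambda_2(B_{n,p})\ge u$. Then the sum of minima equals $(2u^2-u)+u=2u^2$, so $\sigma(\Gamma(\mathcal{D}),B_{n,p})=\mathcal{E}(\Gamma(\mathcal{D}))+2S-8u^2$. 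The required inequality then reduces to
\[
2S-8u^2>4u,\qquad\text{i.e.}\qquad S>4u^2+2u .
\]

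Next I compute $S$ in the two cases of the statement. If $3\mid u$, then $p=n/3$ and $S^2=n^2/3$. Otherwise $u^2\equiv1 \pmod 3$, so $n=8u^2\equiv 2\pmod 3$ and $p=(n+1)/3$ is an integer; then $S^2=\frac{n+1}{3}(n-1)=\frac{n^2-1}{3}$. In both cases $1\le p<\frac n2$ holds, so Lemma \ref{specB_np} applies, and $S^2\ge (n^2-1)/3=(64u^4-1)/3$. It therefore suffices to prove
\[
\frac{64u^4-1}{3}>(4u^2+2u)^2 ,
\]
which is equivalent to $16u^4-48u^3-12u^2-1>0$. For $u\ge4$ we have $16u^4-48u^3=16u^3(u-3)\ge 16u^3>12u^2+1$, so the inequality holds. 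This is exactly where the hypothesis $u\ge 4$ is used; for $u=3$ the leading terms cancel and the inequality fails.

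Finally, the assumption on the minima follows from $S>4u^2+2u$ and $p\le (8u^2+1)/3$:
\[
\lambda_2(B_{n,p})=\tfrac{S-p}{2}>\tfrac12\bigl(\tfrac{4u^2}{3}+2u-\tfrac13\bigr)>u,
\qquad
\lambda_1(B_{n,p})=\tfrac{S+p}{2}>\tfrac{S}{2}>2u^2>2u^2-u .
\]
I expect the main obstacle to be this bookkeeping of which eigenvalue is the smaller in each position, since the identity for the sum of minima depends on it. The case analysis on $p$ is needed so that $p$ is an integer, and the polynomial estimate is routine once the reduction $S>4u^2+2u$ is in place.
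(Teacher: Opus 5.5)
Your proposal is correct and follows the paper's proof: both apply Lemma \ref{bipdistance}, check that $\lambda_i(B_{n,p})\ge\lambda_i(\Gamma(\mathcal{D}))$ for $i=1,2$, and reduce the claim to $2\sqrt{p(2n-3p)}>n+\sqrt{2n}$ (your $S>4u^2+2u$) using $p(2n-3p)\ge (n^2-1)/3$. Working in $u$ lets you prove the final inequality explicitly as $16u^4-48u^3-12u^2-1>0$, whereas the paper only asserts that it holds for $n\ge 84$.
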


\begin{proof}
Let us first compute $\sigma(\Gamma(\mathcal{D}),B_{n,p})$. By use of Lemma \ref{bipdistance}, we obtain:
\begin{equation}\label{sdistBD}
\sigma(\Gamma(\mathcal{D}),B_{n,p})=\mathcal{E}(\Gamma(\mathcal{D}))+\mathcal{E}(B_{n,p})-4\sum\limits_{i=1}^{2}\min\{\lambda_i(\Gamma(\mathcal{D})),\lambda_i(B_{n,p})\},
\end{equation} 
since $\eta(B_{n,p})=n-4$.

According to the assumptions of the statement, $\frac{n}{3}\leq p\leq \frac{n+1}{3}$, so, since $p(2n-3p)$ is concave in $p$, the following inequality holds:
$$p(2n-3p)\geq\frac{n+1}{3}(n-1)=\frac{n^2-1}{3}.$$
Therefore, using Lemma \ref{specB_np} and bearing in mind the values of the eigenvalues of the graph $\Gamma(\mathcal{D})$ mention before in the section, we find:
\begin{equation*}
\lambda_1(B_{n,p})\geq\frac{n}{6}+\frac{\sqrt{n^2-1}}{2\sqrt{3}}>\frac{n}{4}-\frac{\sqrt{n}}{2\sqrt{2}}=\lambda_1(\Gamma(\mathcal{D})),
\end{equation*}
and  
\begin{equation*}
\lambda_2(B_{n,p})\geq-\frac{n+1}{6}+\frac{\sqrt{n^2-1}}{2\sqrt{3}}\geq\frac{\sqrt{n}}{2\sqrt{2}}=\lambda_2(\Gamma(\mathcal{D})),
\end{equation*}
where both inequalities are true for all $n\geq 12$.

Now, (\ref{sdistBD}) reduces to:
\begin{equation*}
\sigma(\Gamma(\mathcal{D}),B_{n,p})=\mathcal{E}(\Gamma(\mathcal{D}))+\mathcal{E}(B_{n,p})-4(\lambda_1(\Gamma(\mathcal{D}))+\lambda_2(\Gamma(\mathcal{D}))),
\end{equation*}
i.e. using Lemma \ref{specB_np} and knowing the values for the eigenvalues of the graph $\Gamma(\mathcal{D})$ mention before in the section:
\begin{equation}\label{distBD_final}
\sigma(\Gamma(\mathcal{D}),B_{n,p})=\left(\frac{n}{2}\right)^{3/2}-\frac{n}{2}-\frac{2\sqrt{n}}{\sqrt{2}}+2\sqrt{p(2n-3p)}.
\end{equation}

By use of Lemma \ref{sdistKD} and the equation (\ref{distBD_final}), we obtain:
\begin{equation*}
\sigma(\Gamma(\mathcal{D}),B_{n,p})-\sigma(K_{\frac{n}{2},\frac{n}{2}},\Gamma(\mathcal{D}))=2\sqrt{p(2n-3p)}-\frac{2\sqrt{n}}{\sqrt{2}}-n>0,
\end{equation*}
which is true, since for $\frac{n}{3}\leq p\leq\frac{n+1}{3}$,
\begin{equation*}
2\sqrt{p(2n-3p)}-\frac{2\sqrt{n}}{\sqrt{2}}-n\geq 2\sqrt{\frac{n^2-1}{3}}-\sqrt{2n}-n>0,
\end{equation*}
holds for every $n\geq 84$. This completes the proof.
\end{proof}

Regarding the proof, Theorem \ref{main_bip} holds for $n\geq 84$, but since it is assumed $u\geq 4$, the smallest possible value for $n$ is $128$. Namely, a symmetric design for $u=4$ surely exists (see, for example, \cite{2designs} and references therein), and the spectrum of its incidence graph consists of the following eigenvalues: $\pm 28$ and $\pm[4]^{63}$. On the other hand, the eigenvalues of the graph $B_{128,43}$ are approximately: $\pm 58.45$, $\pm 15.45$ and $[0]^{124}$, while the eigenvalues of $K_{64,64}$ are: $\pm 64$ and $[0]^{126}$. Therefore, we have: 
$$579.8\approx\sigma(\Gamma(\mathcal{D}),B_{128,43})>576=\sigma\left(K_{64,64},\Gamma(\mathcal{D})\right).$$
Since on the set $\mathcal{B}_n$ of all bipartite graphs of even order $n$, 
${\rm secc}_{\mathcal{B}_n}\left(K_{\frac{n}{2},\frac{n}{2}}\right)=\sigma\left(K_{\frac{n}{2},\frac{n}{2}},\Gamma(\mathcal{D})\right)$ holds, we actually have:
$${\rm secc}_{\mathcal{B}_{128}}\left(K_{64,64}\right)=\sigma\left(K_{64,64},\Gamma(\mathcal{D})\right)<\sigma(\Gamma(\mathcal{D}),B_{128,43}),$$
which disproves Conjecture \ref{con_bip}.

\bigskip

{\sloppy\bigskip\noindent\textbf{Acknowledgments.}~~The author Škrekovski R. acknowledges the partial support by ARIS program P1-0383, bilateral Slovenian-Croatian project BI-HR/25-27-004 and the annual work program of Rudolfovo, while the author Sedlar J. acknowledges the support by Project KK.\allowbreak01.\allowbreak 1.\allowbreak1.\allowbreak02.\allowbreak0027 co-financed by the European Regional Development Fund, by Croatian Ministry of Science, Education and Youth through the bilateral Croatian-Slovenian project 2025-26, and by the NextGeneration EU foundation via IP-UNIST-17 (GEORAZ). }

\end{document}